\numberwithin{equation}{section}
\tikzstyle arrowstyle=[scale=1]
\tikzstyle directed=[postaction={decorate,decoration={markings,
    mark=at position .5 with {\arrow[arrowstyle]{stealth}}}}]
\tikzstyle reverse directed=[postaction={decorate,decoration={markings,
    mark=at position .65 with {\arrowreversed[arrowstyle]{stealth};}}}]
\newtheorem{thm}{Theorem}[section]
\newtheorem{lem}[thm]{Lemma}
\newtheorem{prop}[thm]{Proposition}
\newtheorem{cor}[thm]{Corollary}
\theoremstyle{definition}
\newtheorem{defn}[thm]{Definition}
\numberwithin{equation}{section}
\author{Ye Liu}
\address{
Department of Mathematics\\
Hokkaido University\\
North 10, West 8, Kita-ku, Sapporo, 060-0810, JAPAN}
\email{liu@math.sci.hokudai.ac.jp}
\keywords{Chromatic functor; stable partition; representation stability}
\subjclass[2010]{Primary 05C15, Secondary 20C30}
\begin{document}

\title[Chromatic functors and stable partitions]{On chromatic functors and stable partitions of graphs}

\begin{abstract}
The chromatic functor of a simple graph is a functorization of the chromatic polynomial. M. Yoshinaga showed in \cite{Yoshinaga2015} that two finite graphs have isomorphic chromatic functors if and only if they have the same chromatic polynomial. The key ingredient in the proof is the use of stable partitions of graphs. The latter is shown to be closely related to chromatic functors. In this note, we further investigate some interesting properties of chromatic functors associated to simple graphs using stable partitions. Our first result is the determination of the group of natural automorphisms of the chromatic functor, which is in general a larger group than the automorphism group of the graph. The second result is that the composition of the chromatic functor associated to a finite graph restricted to the category $\mathrm{FI}$ of finite sets and injections with the free functor into the category of complex vector spaces yields a consistent sequence of representations of symmetric groups which is representation stable in the sense of Church-Farb \cite{Church2013}.
\end{abstract}

\maketitle
\section{Introduction and definitions}
In the theory of graph coloring, we restrict ourselves to simple graphs, which are graphs with no loops or multiedges. By a graph $G=(V,E)$, we always mean an undirected simple graph $G$ with vertex set $V$ and edge set $E\subset 2^V$. A regular (vertex) $S$-coloring of $G$ with color set $S$ is a map $c:V\to S$ such that $c(v)\neq c(u)$ if $\{v,u\}\in E$ is an edge of $G$. A (regular) coloring with color set $[n]=\{1,2,\ldots,n\}$ is simply called a (regular) $n$-coloring. A (simple) graph $G$ is finite if $V$ is finite.

The chromatic polynomial of a finite graph $G$ is the polynomial $\chi(G,t)\in\mathbb{Z}[t]$ satisfying
\[
\chi(G,n)=\#\{c:V\to [n]\mid c(v)\neq c(u) \text{ if } \{v,u\}\in E\}
\]
for all $n>0$ (cf. \cite{Read1968}). The chromatic functor associated to a graph $G$ is introduced in \cite{Yoshinaga2015} as a functorization of $\chi(G,t)$.
\begin{defn}[Chromatic functor]
Let $G$ be a graph, define the chromatic functor associated to $G$ 
\[
\underline{\chi}(G,\bullet):\mathbf{Set}^{\mathbf{inj}}\to\mathbf{Set}^{\mathbf{inj}}
\]
between the category of sets and injections by setting
\[
\underline{\chi}(G,S):=\{c:V\to S\mid c(v)\neq c(u) \text{ if } \{v,u\}\in E\}
\]
for a set $S$ and the injection
\begin{align*}
\iota_*:=\underline{\chi}(G,\iota):\underline{\chi}(G,S)&\longrightarrow \underline{\chi}(G,T)\\
c&\longmapsto \iota\circ c
\end{align*}
induced by an injection $\iota:S\to T$.
\end{defn}
The main result of \cite{Yoshinaga2015} is the following theorem.
\begin{thm}[\cite{Yoshinaga2015}]\label{Yos}
Let $G_1, G_2$ be finite graphs, then the following are equivalent.

(i)~  $\chi(G_1,t)=\chi(G_2,t)$;

(ii)~ $\underline{\chi}(G_1,\bullet)\simeq\underline{\chi}(G_2,\bullet)$ as functors $\mathbf{Set}^{\mathbf{inj}}\to\mathbf{Set}^{\mathbf{inj}}$.
\end{thm}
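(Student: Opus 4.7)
The easy direction (ii) $\Rightarrow$ (i) follows from cardinality: a natural isomorphism of functors, evaluated at $[n]$, gives a bijection $\underline{\chi}(G_1,[n]) \cong \underline{\chi}(G_2,[n])$, whose common cardinality is $\chi(G_i,n)$. Agreement on all positive integers forces the polynomials to coincide.

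For (i) $\Rightarrow$ (ii), my plan is to decompose the chromatic functor via stable partitions. Call a partition $\pi$ of $V(G)$ \emph{stable} if every block is an independent set, write $\mathrm{SP}(G)$ for the set of stable partitions, and set $a_k(G) := \#\{\pi \in \mathrm{SP}(G) : |\pi|=k\}$. The first key observation is that any coloring $c \in \underline{\chi}(G,S)$ admits the canonical epi-mono factorization $V \twoheadrightarrow V/{\sim_c} \hookrightarrow S$, where $\sim_c$ is the fiber relation of $c$; this partition is stable precisely because $c$ is a regular coloring, and the induced quotient map is an injection. Conversely, a stable partition $\pi$ together with an injection of its block set into $S$ determines a coloring. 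This produces a bijection
$$\underline{\chi}(G,S) \;\cong\; \bigsqcup_{\pi \in \mathrm{SP}(G)} \mathrm{Inj}(\pi, S),$$
natural in $S$ because post-composition with an injection $\iota:S \to T$ preserves both the fiber partition and injectivity of the quotient map. Choosing for each $\pi$ of size $k$ a bijection $\pi \cong [k]$, the right-hand side becomes $\bigsqcup_{k \geq 0} \mathrm{Inj}([k],\bullet)^{\sqcup a_k(G)}$.

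From here the conclusion is immediate. The chromatic polynomial satisfies
$$\chi(G,t) = \sum_{k \geq 0} a_k(G)\,(t)_k, \qquad (t)_k := t(t-1)\cdots(t-k+1),$$
since $\#\mathrm{Inj}([k],[n]) = (n)_k$. Because the falling factorials $\{(t)_k\}_{k \geq 0}$ form a $\mathbb{Z}$-basis of $\mathbb{Z}[t]$, equality of $\chi(G_1,t)$ and $\chi(G_2,t)$ forces $a_k(G_1) = a_k(G_2)$ for every $k$, and the structural decomposition then yields the desired natural isomorphism. I expect the main delicate point to be verifying that the reindexing identification $\mathrm{Inj}(\pi,\bullet) \cong \mathrm{Inj}([k],\bullet)$, induced from a chosen bijection $\varphi_\pi : \pi \to [k]$, is truly a natural isomorphism of functors rather than a mere pointwise bijection; however, the reindexing map $f \mapsto f \circ \varphi_\pi^{-1}$ commutes with post-composition by any injection $\iota:S \to T$ since $\varphi_\pi$ acts on the source side, so naturality is automatic. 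The only other routine check is that the disjoint-union decomposition is itself functorial, which holds because $\iota_*$ sends colorings inducing $\pi$ to colorings still inducing $\pi$, merely altering the target embedding.
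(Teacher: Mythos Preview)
Your proof is correct and follows essentially the same approach that the paper indicates: the paper does not give a self-contained proof of this theorem (it is cited from \cite{Yoshinaga2015}), but it identifies Proposition~\ref{dec}---the natural decomposition $\underline{\chi}(G,\bullet)\simeq\bigsqcup_{\Pi\in\mathbf{St}(G)}\underline{\chi}(K_\Pi,\bullet)$---as the key ingredient, and that is exactly your stable-partition decomposition (noting $\underline{\chi}(K_\Pi,S)=\mathrm{Inj}(\Pi,S)$). Your completion via the falling-factorial basis of $\mathbb{Z}[t]$ to extract $a_k(G_1)=a_k(G_2)$ is the standard way to finish, and your naturality checks are fine.
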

The key ingredient of the proof is the definition of stable partitions of graphs, which we now describe.
\begin{defn}[Stable partition]
A stable partition $\Pi=\{\pi_{i}\neq\varnothing\mid i\in I\}$ of a graph $G$ is a partition of the vertex set $V$ such that if vertices $v$ and $u$ are in the same $\pi_{i}$, then $\{v,u\}$ is not an edge of $G$.
\end{defn}
Denote by $\mathbf{St}(G)$ the set of all stable partitions of $G$ and $\mathbf{St}_{\kappa}(G)$ the set of stable partitions of $G$ with cardinality $\kappa$. Then
\[
\mathbf{St}(G)=\bigsqcup_{\kappa}\mathbf{St}_{\kappa}(G),
\]
where $\kappa$ runs over all cardinal numbers. Stable partitions are closely related to graph colorings. Given a regular coloring $c\in\underline{\chi}(G,S)$, there is a stable partition $\Pi_c$ associated to $c$ defined as
\[
\Pi_c:=\{c^{-1}(s)\mid s\in S,c^{-1}(s)\neq\varnothing\}.
\]
Every stable partition $\Pi$ arises in this way, just let $c:V\to\Pi$ be the map taking each vertex to the member of $\Pi$ containing it. Then $\Pi=\Pi_c$. For $\Pi_c$ associated to $c\in\underline{\chi}(G,S)$, the map $\tilde{c}:\Pi_c\to S$ mapping $c^{-1}(s)$ to $s$ is injective and can be regarded as an $S$-coloring of the complete graph $K_{\Pi_c}$ with vertex set $\Pi_c$. The map $c\mapsto\tilde{c}$ in fact defines a natural isomorphism as in the next proposition, which reveals the significant relation between chromatic functors and stable partitions. It is also the key to the proof of Theorem \ref{Yos}.
\begin{prop}[\cite{Yoshinaga2015}]\label{dec}
The map
\begin{align*}
\Omega^G_S:\underline{\chi}(G,S)&\to\bigsqcup_{\Pi\in\mathbf{St}(G)}\underline{\chi}(K_{\Pi},S)\\
c&\mapsto \tilde{c}\in\underline{\chi}(K_{\Pi_c},S)
\end{align*}
has inverse map
\begin{align*}
(\Omega^G_S)^{-1}:\bigsqcup_{\Pi\in\mathbf{St}(G)}\underline{\chi}(K_{\Pi},S)&\to\underline{\chi}(G,S)\\
\underline{\chi}(K_{\Pi_c},S)\ni c^{\prime}&\mapsto c^{\prime}\circ c\in\underline{\chi}(G,S).
\end{align*}
Moreover, $\Omega^G_S$ induces a natural isomorphism of functors
\[
\Omega^G:\underline{\chi}(G,\bullet)\xlongrightarrow{\sim} \bigsqcup_{\Pi\in\mathbf{St}(G)}\underline{\chi}(K_{\Pi},\bullet),
\]
where $K_{\Pi}$ is the complete graph with vertex set $\Pi$.
\end{prop}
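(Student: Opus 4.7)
My plan is to verify the proposition by four small checks: well-definedness of $\Omega^G_S$, well-definedness of the candidate inverse, the two composition identities, and naturality in $S$.

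For well-definedness, I would first observe that if $c\in\underline{\chi}(G,S)$, then $\Pi_c$ really is a stable partition: two vertices $v,u$ lying in the same fiber $c^{-1}(s)$ satisfy $c(v)=c(u)$, so by regularity $\{v,u\}\notin E$. The map $\tilde c:\Pi_c\to S$, $c^{-1}(s)\mapsto s$, is injective by construction (distinct nonempty fibers have distinct labels), and injectivity is exactly the condition for being an element of $\underline{\chi}(K_{\Pi_c},S)$. For the inverse direction, given a stable partition $\Pi$ with associated quotient map $q_\Pi:V\twoheadrightarrow\Pi$ and an injection $c'\in\underline{\chi}(K_\Pi,S)$, the composite $c'\circ q_\Pi$ is a regular $S$-coloring of $G$: any edge $\{v,u\}\in E$ must have $q_\Pi(v)\neq q_\Pi(u)$ by stability, and injectivity of $c'$ then gives $c'(q_\Pi(v))\neq c'(q_\Pi(u))$.

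Next I would verify the two round-trips. Starting with $c\in\underline{\chi}(G,S)$, the composite $(\Omega^G_S)^{-1}(\Omega^G_S(c))$ sends a vertex $v$ to $\tilde c(q_{\Pi_c}(v))=\tilde c(c^{-1}(c(v)))=c(v)$, recovering $c$. Conversely, starting with $c'\in\underline{\chi}(K_\Pi,S)$ and setting $c:=c'\circ q_\Pi$, injectivity of $c'$ forces $c^{-1}(s)=q_\Pi^{-1}((c')^{-1}(s))$ to be either empty or a single block of $\Pi$, whence $\Pi_c=\Pi$ and $\tilde c(\pi)=c'(\pi)$ for every $\pi\in\Pi$. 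So $\Omega^G_S\circ(\Omega^G_S)^{-1}=\mathrm{id}$ on the component indexed by $\Pi$, and altogether the two maps are mutually inverse bijections.

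Finally, for naturality, given an injection $\iota:S\to T$ and $c\in\underline{\chi}(G,S)$, I would check that $\Pi_{\iota\circ c}=\Pi_c$ as set partitions of $V$ (again using injectivity of $\iota$, which forces $(\iota\circ c)^{-1}(t)$ to coincide with some $c^{-1}(s)$ whenever nonempty) and that under this identification $\widetilde{\iota\circ c}=\iota\circ\tilde c$. This is exactly the statement that the square
\[
\begin{CD}
\underline{\chi}(G,S) @>\Omega^G_S>> \bigsqcup_\Pi \underline{\chi}(K_\Pi,S) \\
@V\iota_*VV @VV\iota_*V \\
\underline{\chi}(G,T) @>\Omega^G_T>> \bigsqcup_\Pi \underline{\chi}(K_\Pi,T)
\end{CD}
\]
commutes, which gives the desired natural isomorphism $\Omega^G$.

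There is no real obstacle here; the argument is essentially a bookkeeping exercise. The only subtle point to pin down is that the stable partition $\Pi_c$ depends only on the fibers of $c$ and is unchanged by post-composition with an injection, which is what makes naturality hold on the nose rather than only up to reindexing the disjoint union.
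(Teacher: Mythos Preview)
Your argument is correct. The paper itself does not supply a proof of this proposition---it is stated with a citation to \cite{Yoshinaga2015} and used as a black box---so there is nothing in the paper to compare against; your four-step verification (well-definedness of both maps, the two composition identities, and naturality via the observation that $\Pi_{\iota\circ c}=\Pi_c$ for injective $\iota$) is exactly the standard unpacking one would expect.
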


In this note, we use this proposition to investigate the following problems.

In Section 2, we determine the group structure of $\mathrm{Aut}(\underline{\chi}(G,\bullet))$, that is the group of natural automorphisms of the chromatic functor $\underline{\chi}(G,\bullet)$. We also compare this group with the graph automorphism group $\mathrm{Aut}(G)$.

In Section 3, we focus on the chromatic functor $\underline{\chi}(G,\bullet):\mathrm{FI}\to\mathrm{FI}$ associated to a finite graph $G$ restricted to the category $\mathrm{FI}$ of finite sets and injections. Consider the composition of functors
\[
\mathrm{FI}\xlongrightarrow{\underline{\chi}(G,\bullet)}\mathrm{FI}\xlongrightarrow{\mathbb{C}[\bullet]}\mathrm{Vect}_{\mathbb{C}},
\]
where $\mathbb{C}[\bullet]$ is the free functor taking a finite set $S$ to the complex vector space spanned by $S$. We show that the consistent sequence $\{\mathbb{C}[\underline{\chi}(G,[n])]\}_n$ of $\mathfrak{S}_n$-representations is representation stable in the sense of Church-Farb \cite{Church2013}.

\section{The automorphism group of chromatic functors}
Let $G$ be a graph. The natural automorphisms of the chromatic functor $\underline{\chi}(G,\bullet):\mathbf{Set}^{\mathbf{inj}}\to\mathbf{Set}^{\mathbf{inj}}$ form a group $\mathrm{Aut}(\underline{\chi}(G,\bullet))$. This group structure is essentially determined by stable partitions of $G$.

Before proving our main result, we recall the uniqueness of stable partitions (Section 2.3 of \cite{Yoshinaga2015}) and give a more explicit expression which implies our result.

Consider two families of sets $\mathfrak{X}=\{X_i\mid i\in I\}$ and $\mathfrak{Y}=\{Y_j\mid j\in J\}$. An isomorphism from $\mathfrak{X}$ to $\mathfrak{Y}$ is a collection $(\alpha;\alpha_i)_{i\in I}$, where $\alpha: I\to J$ and $\alpha_i: X_i\to Y_{\alpha(i)}$ are all bijections of sets. Denote by $\mathrm{Isom}(\mathfrak{X},\mathfrak{Y})$ the set of all isomorphisms from $\mathfrak{X}$ to $\mathfrak{Y}$. Let
\[
F_{\mathfrak{X}}=\bigsqcup_{i\in I}\underline{\chi}(K_{X_i},\bullet),~ F_{\mathfrak{Y}}=\bigsqcup_{j\in J}\underline{\chi}(K_{Y_j},\bullet)
\]
be functors $\mathbf{Set}^{\mathbf{inj}}\to\mathbf{Set}^{\mathbf{inj}}$. Denote by $\mathrm{Isom}(F_{\mathfrak{X}},F_{\mathfrak{Y}})$ the set of all natural isomorphisms from $F_{\mathfrak{X}}$ to $F_{\mathfrak{Y}}$. The next proposition is a recollection of results in Section 2.3 of \cite{Yoshinaga2015}. We rewrite the proof for convenience.
\begin{prop}\label{Xi}
There is a bijection $\mathrm{Isom}(\mathfrak{X},\mathfrak{Y})\to\mathrm{Isom}(F_{\mathfrak{X}},F_{\mathfrak{Y}})$.
\end{prop}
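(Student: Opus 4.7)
The plan is to exhibit explicit maps in both directions and verify they are mutually inverse. The crucial observation is that $\underline{\chi}(K_{X_i},S)=\mathrm{Inj}(X_i,S)$, so $F_{\mathfrak{X}}(S)=\bigsqcup_{i\in I}\mathrm{Inj}(X_i,S)$ and similarly for $F_{\mathfrak{Y}}$; a natural transformation between these disjoint-union-valued functors turns out to be pinned down by its values on the identity injections $\mathrm{id}_{X_i}\in\mathrm{Inj}(X_i,X_i)$.

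For the forward direction, given $(\alpha;\alpha_i)\in\mathrm{Isom}(\mathfrak{X},\mathfrak{Y})$ I would define $\Phi_{(\alpha;\alpha_i)}:F_{\mathfrak{X}}\to F_{\mathfrak{Y}}$ summand-wise by $\Phi_S(c):=c\circ\alpha_i^{-1}$ for $c\in\mathrm{Inj}(X_i,S)$; the result lies in $\mathrm{Inj}(Y_{\alpha(i)},S)$. Naturality in $S$ is immediate since the transition maps of both $F_{\mathfrak{X}}$ and $F_{\mathfrak{Y}}$ act by postcomposition, which commutes with precomposition by $\alpha_i^{-1}$. Invertibility of $\Phi_{(\alpha;\alpha_i)}$ follows by applying the same construction to $(\alpha^{-1};\alpha_{\alpha^{-1}(j)}^{-1})_{j\in J}$.

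For the reverse direction, given $\Phi\in\mathrm{Isom}(F_{\mathfrak{X}},F_{\mathfrak{Y}})$, I would extract the data by evaluating at identities. For each $i\in I$, the element $\mathrm{id}_{X_i}\in\mathrm{Inj}(X_i,X_i)\subset F_{\mathfrak{X}}(X_i)$ has an image $\Phi_{X_i}(\mathrm{id}_{X_i})\in F_{\mathfrak{Y}}(X_i)$ that lies in a unique summand $\mathrm{Inj}(Y_j,X_i)$; set $\alpha(i):=j$ and $\beta_i:=\Phi_{X_i}(\mathrm{id}_{X_i}):Y_{\alpha(i)}\to X_i$. For arbitrary $c\in\mathrm{Inj}(X_i,S)$ viewed as an injection $X_i\hookrightarrow S$, naturality of $\Phi$ along $c$ yields $\Phi_S(c)=\Phi_S(c\circ\mathrm{id}_{X_i})=c\circ\beta_i$, so $\Phi$ is completely determined on the $i$-th summand by $\beta_i$ and sends it into the $\alpha(i)$-th summand. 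Applying the same recipe to $\Phi^{-1}$ yields dual data $(\alpha';\beta'_j)$; the composition identities $\Phi^{-1}\circ\Phi=\mathrm{id}$ and $\Phi\circ\Phi^{-1}=\mathrm{id}$ evaluated at $\mathrm{id}_{X_i}$ and $\mathrm{id}_{Y_j}$ force $\alpha'=\alpha^{-1}$ and $\beta'_{\alpha(i)}=\beta_i^{-1}$. Hence $\alpha$ is a bijection and each $\beta_i$ is a bijection, so setting $\alpha_i:=\beta_i^{-1}$ produces the required element of $\mathrm{Isom}(\mathfrak{X},\mathfrak{Y})$.

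Finally, the two constructions are visibly mutual inverses: starting from $(\alpha;\alpha_i)$, the constructed $\Phi$ sends $\mathrm{id}_{X_i}$ to $\alpha_i^{-1}$, from which the recovery procedure returns $\alpha$ along with $(\alpha_i^{-1})^{-1}=\alpha_i$; conversely, starting from $\Phi$ with recovered data $(\alpha;\beta_i^{-1})$, the forward construction's formula gives $c\mapsto c\circ(\beta_i^{-1})^{-1}=c\circ\beta_i$, which equals $\Phi_S(c)$ by the naturality computation. The one step that requires genuine care is the naturality argument that lets a single value $\Phi_{X_i}(\mathrm{id}_{X_i})$ determine $\Phi_S$ on the whole $i$-th summand of $F_{\mathfrak{X}}(S)$; once that is in hand, everything else is direct bookkeeping.
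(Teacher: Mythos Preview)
Your argument is correct and follows essentially the same route as the paper: the forward map is defined by precomposition with $\alpha_i^{-1}$, and the inverse map is obtained by evaluating the natural isomorphism at the identities $\mathrm{id}_{X_i}$ and using naturality to propagate. The only cosmetic difference is that the paper extracts $\alpha_i$ directly as $f_{Y_{\alpha(i)}}^{-1}(\mathrm{id}_{Y_{\alpha(i)}})$ rather than as the inverse of your $\beta_i=f_{X_i}(\mathrm{id}_{X_i})$, but these yield the same data and the paper explicitly notes that $f_{X_i}(\mathrm{id}_{X_i})$ is the inverse of $\alpha_i$.
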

\begin{proof}
If there exists an isomorphism $(\alpha;\alpha_i)_{i\in I}\in\mathrm{Isom}(\mathfrak{X},\mathfrak{Y})$, it determines a natural isomorphism $f:F_{\mathfrak{X}}\to F_{\mathfrak{Y}}$ whose component $f_S:F_{\mathfrak{X}}(S)\to F_{\mathfrak{Y}}(S)$ at $S\in\mathbf{Set}^{\mathbf{inj}}$ sends $c\in\underline{\chi}(K_{X_i},S)$ to $c\circ\alpha_i^{-1}\in\underline{\chi}(K_{Y_{\alpha(i)}},S)$.

Now we construct the inverse map. Given a natural isomorphism $f:F_{\mathfrak{X}}\to F_{\mathfrak{Y}}$, the bijection $\alpha:I\to J$ is defined by the following
\[
F_{\mathfrak{X}}(X_i)\supset\underline{\chi}(K_{X_i},X_i)\ni\mathrm{id}_{X_i}\mapsto f_{X_i}(\mathrm{id}_{X_i})\in\underline{\chi}(K_{Y_{\color{red}{\alpha(i)}}},X_i)\subset F_{\mathfrak{Y}}(X_i).
\]
Moreover, the component $f_S$ of $f$ at $S$ maps an injection $c:X_i\to S$, that is a coloring $c\in\underline{\chi}(K_{X_i},S)$, to $c_*(f_{X_i}(\mathrm{id}_{X_i}))\in\underline{\chi}(K_{Y_{\alpha(i)}},S)$ by naturality of $f$. 
\[
\begin{tikzpicture}
  \matrix (m) [matrix of math nodes,row sep=3em,column sep=4em,minimum width=2em]
  {
     \underline{\chi}(K_{X_i},X_i) & \underline{\chi}(K_{Y_{\alpha(i)}},X_i) \\
     \underline{\chi}(K_{X_i},S) & \underline{\chi}(K_{Y_{\alpha(i)}},S) \\};
  \path[-stealth]
    (m-1-1) edge node [left] {$c_*$} (m-2-1)
            edge node [below] {$f_{X_i}$} (m-1-2)
    (m-2-1) edge node [below] {$f_S$} (m-2-2)
    (m-1-2) edge node [right] {$c_*$} (m-2-2);
\end{tikzpicture}
\]
The bijection $\alpha_i:X_i\to Y_{\alpha(i)}$ is defined as
\begin{align*}
f_{Y_{\alpha(i)}}^{-1}:\underline{\chi}(K_{Y_{\alpha(i)}},Y_{\alpha(i)})&\to\underline{\chi}(K_{X_i},Y_{\alpha(i)})\\
\mathrm{id}_{Y_{\alpha(i)}}&\mapsto\color{red}{\alpha_i}:=f_{Y_{\alpha(i)}}^{-1}(\mathrm{id}_{Y_{\alpha(i)}}).
\end{align*}
This $\alpha_i$ is indeed a bijection with inverse map $f_{X_i}(\mathrm{id}_{X_i}):Y_{\alpha(i)}\to X_i$. Thus we obtain an isomorphism $(\alpha;\alpha_i)_{i\in I}\in\mathrm{Isom}(\mathfrak{X},\mathfrak{Y})$.

It is routine to check that the two maps constructed above are inverse to each other.
\end{proof}

For our purpose, write $\mathrm{Aut}(\mathfrak{X})$ for $\mathrm{Isom}(\mathfrak{X},\mathfrak{X})$ and $\mathrm{Aut}(F_{\mathfrak{X}})$ for $\mathrm{Isom}(F_{\mathfrak{X}},F_{\mathfrak{X}})$. They are both groups under compositions. The previous proposition immediately implies the following result.
\begin{cor}\label{isom}
There is a group isomorphism $\mathrm{Aut}(\mathfrak{X})\to\mathrm{Aut}(F_{\mathfrak{X}})$.
\end{cor}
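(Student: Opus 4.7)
The plan is to observe that Corollary \ref{isom} follows almost immediately from Proposition \ref{Xi} by specializing $\mathfrak{Y}=\mathfrak{X}$, so that the content to prove reduces to showing that the bijection of sets produced in Proposition \ref{Xi} intertwines the two composition laws. First I would restate the bijection $\Phi:\mathrm{Aut}(\mathfrak{X})\to\mathrm{Aut}(F_{\mathfrak{X}})$ explicitly: an automorphism $(\alpha;\alpha_i)_{i\in I}$ of $\mathfrak{X}$ is sent to the natural automorphism $f$ whose component $f_S$ takes $c\in\underline{\chi}(K_{X_i},S)$ to $c\circ\alpha_i^{-1}\in\underline{\chi}(K_{X_{\alpha(i)}},S)$. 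Proposition \ref{Xi} already gives that $\Phi$ is a bijection, so the only remaining task is to verify $\Phi\bigl((\alpha;\alpha_i)\circ(\beta;\beta_i)\bigr)=\Phi(\alpha;\alpha_i)\circ\Phi(\beta;\beta_i)$ and that $\Phi$ sends the identity to the identity.

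The key step is therefore to unwind both sides of the composition formula. On the algebraic side, composition in $\mathrm{Aut}(\mathfrak{X})$ is
\[
(\alpha;\alpha_i)\circ(\beta;\beta_i)=\bigl(\alpha\circ\beta;\,\alpha_{\beta(i)}\circ\beta_i\bigr)_{i\in I},
\]
so the corresponding natural automorphism sends $c\in\underline{\chi}(K_{X_i},S)$ to $c\circ(\alpha_{\beta(i)}\circ\beta_i)^{-1}=c\circ\beta_i^{-1}\circ\alpha_{\beta(i)}^{-1}$. On the functorial side, applying $\Phi(\beta;\beta_i)$ first gives $c\circ\beta_i^{-1}\in\underline{\chi}(K_{X_{\beta(i)}},S)$, and then applying $\Phi(\alpha;\alpha_i)$ post-composes with $\alpha_{\beta(i)}^{-1}$, yielding exactly $c\circ\beta_i^{-1}\circ\alpha_{\beta(i)}^{-1}$. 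Thus the two sides agree on every component $f_S$ and on every summand $\underline{\chi}(K_{X_i},S)$, proving $\Phi$ is a group homomorphism; the identity element is evidently preserved, since $(\mathrm{id}_I;\mathrm{id}_{X_i})$ produces the identity natural transformation.

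There is essentially no obstacle here: once the explicit description of $\Phi$ from the proof of Proposition \ref{Xi} is in hand, the verification is a short diagram chase in which the inversion of $\alpha_i$ (needed because $\Phi$ pre-composes with $\alpha_i^{-1}$) is the only point where one must be careful about the order of composition. For this reason I would present the corollary with only a brief remark pointing at the formula above, rather than a full proof.
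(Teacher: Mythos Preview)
Your proposal is correct and follows exactly the approach the paper takes: the paper states only that the corollary follows immediately from Proposition~\ref{Xi} (specializing $\mathfrak{Y}=\mathfrak{X}$), and you have simply spelled out the routine verification that the resulting bijection respects composition and identities. The computation you give is accurate, including the care taken with the order of composition when inverting $\alpha_{\beta(i)}\circ\beta_i$.
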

A direct analysis on how an automorphism of $\mathfrak{X}$ works yields the following.
\begin{lem}\label{autlem}
Let $\mathfrak{X}=\{X_i\mid i\in I\}$ be a family of sets with index set $I$ of arbitrary cardinality. Then
\[
\mathrm{Aut}(\mathfrak{X})\cong \prod_{\kappa}(\mathfrak{S}_{\kappa}\wr\mathfrak{S}_{n_{\kappa}}),
\]
where $n_{\kappa}=\#\{i\in I\mid \#X_i=\kappa\}$ and $\kappa$ runs over all cardinal numbers.
\end{lem}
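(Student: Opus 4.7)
The plan is to decompose the group $\mathrm{Aut}(\mathfrak{X})$ according to the cardinalities of the $X_i$, and then recognize each factor as a wreath product by choosing reference bijections.

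The first step is the observation that an automorphism $(\alpha;\alpha_i)_{i\in I}\in\mathrm{Aut}(\mathfrak{X})$ requires each bijection $\alpha_i:X_i\to X_{\alpha(i)}$, which forces $\#X_i=\#X_{\alpha(i)}$. Hence $\alpha$ must preserve the partition $I=\bigsqcup_{\kappa}I_{\kappa}$ where $I_{\kappa}:=\{i\in I\mid \#X_i=\kappa\}$, so $\#I_{\kappa}=n_{\kappa}$. Writing $\mathfrak{X}_{\kappa}:=\{X_i\mid i\in I_{\kappa}\}$, this yields a group isomorphism
\[
\mathrm{Aut}(\mathfrak{X})\cong\prod_{\kappa}\mathrm{Aut}(\mathfrak{X}_{\kappa}),
\]
by sending $(\alpha;\alpha_i)_{i\in I}$ to the tuple of its restrictions. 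The compatibility with composition follows because the formula $(\beta;\beta_j)\circ(\alpha;\alpha_i)=(\beta\alpha;\beta_{\alpha(i)}\circ\alpha_i)$ restricts independently to each $I_{\kappa}$.

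Next I would fix one cardinality $\kappa$ and prove $\mathrm{Aut}(\mathfrak{X}_{\kappa})\cong\mathfrak{S}_{\kappa}\wr\mathfrak{S}_{n_{\kappa}}$. Pick a reference set $Y$ with $\#Y=\kappa$ and, once and for all, a bijection $\varphi_i:X_i\to Y$ for each $i\in I_{\kappa}$. Then for any $(\alpha;\alpha_i)\in\mathrm{Aut}(\mathfrak{X}_{\kappa})$, each $\alpha_i:X_i\to X_{\alpha(i)}$ can be transported to an element $\widetilde{\alpha}_i:=\varphi_{\alpha(i)}\circ\alpha_i\circ\varphi_i^{-1}\in\mathfrak{S}_{\kappa}=\mathrm{Aut}(Y)$. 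The assignment
\[
(\alpha;\alpha_i)_{i\in I_{\kappa}}\longmapsto\bigl((\widetilde{\alpha}_i)_{i\in I_{\kappa}},\ \alpha\bigr)\in(\mathfrak{S}_{\kappa})^{n_{\kappa}}\rtimes\mathfrak{S}_{n_{\kappa}}
\]
is clearly a bijection of underlying sets. It remains to verify that it respects the group operation. Computing the composition $(\beta;\beta_j)\circ(\alpha;\alpha_i)=(\beta\alpha;\beta_{\alpha(i)}\circ\alpha_i)$ and transporting via the $\varphi_i$'s gives
\[
\widetilde{\beta_{\alpha(i)}\circ\alpha_i}=\varphi_{\beta\alpha(i)}\circ\beta_{\alpha(i)}\circ\varphi_{\alpha(i)}^{-1}\circ\varphi_{\alpha(i)}\circ\alpha_i\circ\varphi_i^{-1}=\widetilde{\beta}_{\alpha(i)}\circ\widetilde{\alpha}_i,
\]
which is exactly the multiplication rule in the semidirect product $(\mathfrak{S}_{\kappa})^{n_{\kappa}}\rtimes\mathfrak{S}_{n_{\kappa}}=\mathfrak{S}_{\kappa}\wr\mathfrak{S}_{n_{\kappa}}$, with $\mathfrak{S}_{n_{\kappa}}$ acting on $(\mathfrak{S}_{\kappa})^{n_{\kappa}}$ by permuting indices.

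I expect the only non-trivial step to be the bookkeeping in the last computation, namely matching the action of $\alpha$ on the tuple $(\widetilde{\beta}_j)_j$ with the standard wreath product convention. The arbitrariness of the choice of reference bijections $\varphi_i$ only affects the isomorphism up to inner automorphism of $\mathfrak{S}_{\kappa}\wr\mathfrak{S}_{n_{\kappa}}$, so the abstract group structure is unambiguous. Assembling over all $\kappa$ yields the desired formula.
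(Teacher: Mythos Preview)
Your proof is correct and follows essentially the same approach as the paper: partition $I$ by the cardinalities of the $X_i$, observe that $\alpha$ preserves each $I_\kappa$, and then fix reference bijections to a standard set of cardinality $\kappa$ to transport each $\alpha_i$ into $\mathfrak{S}_\kappa$. The paper's version is terser and simply asserts that ``this construction obviously defines a group isomorphism,'' whereas you actually carry out the verification that the map respects composition and matches the wreath product multiplication; this extra bookkeeping is a welcome addition rather than a deviation.
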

\begin{proof}
We construct a group isomorphism $\mathrm{Aut}(\mathfrak{X})\to\prod_{\kappa}(\mathfrak{S}_{\kappa}\wr\mathfrak{S}_{n_{\kappa}})$ as follow.
Given $(\alpha;\alpha_i)\in\mathrm{Aut}(\mathfrak{X})$, where $\alpha:I\to I$ and $\alpha_i:X_i\to X_{\alpha(i)}$ are all bijections of sets. If we write $I_{\kappa}:=\{i\in I\mid \#X_i=\kappa\}$, then $I=\bigsqcup_{\kappa}I_{\kappa}$ and $n_{\kappa}=\#I_{\kappa}$. Note that $\alpha|_{I_{\kappa}}$ is a permutation of $I_{\kappa}$, that is an element of $\mathfrak{S}_{n_{\kappa}}$. By fixing an arbitrary bijection $[\kappa]\to X_i$ for each $i\in I_{\kappa}$, where $[\kappa]$ is a standard set with cardinality $\kappa$, the bijection $\alpha_i:X_i\to X_{\alpha(i)}$ corresponds to an element of $\mathfrak{S}_{\kappa}$. This construction obviously defines a group isomorphism.
\end{proof}

We are now ready to prove the following theorem.
\begin{thm}\label{aut}
Let $G$ be a (possibly infinite) graph and $\underline{\chi}(G,\bullet):\mathbf{Set}^{\mathbf{inj}}\to\mathbf{Set}^{\mathbf{inj}}$ the chromatic functor associated to $G$. Then
\[
\mathrm{Aut}(\underline{\chi}(G,\bullet))\cong \prod_{\kappa}(\mathfrak{S}_{\kappa}\wr\mathfrak{S}_{n_{\kappa}}),
\]
where $n_{\kappa}$ is the cardinality of $\mathbf{St}_{\kappa}(G)$ and $\kappa$ runs over all cardinal numbers.
\end{thm}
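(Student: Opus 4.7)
The plan is to assemble the theorem as an immediate consequence of the three results already proved in this section: Proposition~\ref{dec} decomposes $\underline{\chi}(G,\bullet)$ as a disjoint union of complete-graph chromatic functors indexed by $\mathbf{St}(G)$; Corollary~\ref{isom} identifies the automorphism group of such a disjoint union with the automorphism group of its indexing family of sets; and Lemma~\ref{autlem} computes automorphism groups of families of sets explicitly as products of wreath products. So the proof should just be a matter of feeding the right family $\mathfrak{X}$ into the machine.

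Concretely, I would set $\mathfrak{X} := \{\Pi \mid \Pi \in \mathbf{St}(G)\}$, regarding each stable partition $\Pi$ as a set (whose elements are the blocks of the partition) indexed by $\mathbf{St}(G)$ itself. With this choice, $\#\Pi = \kappa$ precisely when $\Pi \in \mathbf{St}_{\kappa}(G)$, so $n_{\kappa}$ in Lemma~\ref{autlem} agrees with $\#\mathbf{St}_{\kappa}(G)$. By Proposition~\ref{dec}, there is a natural isomorphism
\[
\Omega^G : \underline{\chi}(G,\bullet) \xrightarrow{\sim} F_{\mathfrak{X}} = \bigsqcup_{\Pi \in \mathbf{St}(G)} \underline{\chi}(K_{\Pi},\bullet),
\]
and conjugation by $\Omega^G$ induces a group isomorphism
\[
\mathrm{Aut}(\underline{\chi}(G,\bullet)) \xrightarrow{\sim} \mathrm{Aut}(F_{\mathfrak{X}}).
\]

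Then Corollary~\ref{isom} supplies a group isomorphism $\mathrm{Aut}(F_{\mathfrak{X}}) \cong \mathrm{Aut}(\mathfrak{X})$, and Lemma~\ref{autlem} gives
\[
\mathrm{Aut}(\mathfrak{X}) \cong \prod_{\kappa}(\mathfrak{S}_{\kappa}\wr\mathfrak{S}_{n_{\kappa}})
\]
with $n_{\kappa} = \#\mathbf{St}_{\kappa}(G)$. Composing these three isomorphisms gives exactly the claim.

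I do not anticipate a genuine obstacle: the theorem is essentially a packaging of the preceding lemmas. The only thing worth checking carefully is that the bijection of Proposition~\ref{Xi} used in Corollary~\ref{isom} is genuinely a group homomorphism (i.e.\ compatible with composition), and likewise that conjugation by $\Omega^G$ respects composition of natural isomorphisms; both are routine verifications from the explicit formulas in the proof of Proposition~\ref{Xi}. No hypothesis on the (possibly infinite) graph $G$ is needed, since Lemma~\ref{autlem} was stated for index sets of arbitrary cardinality.
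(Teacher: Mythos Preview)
Your proposal is correct and follows essentially the same route as the paper: set $\mathfrak{X}=\mathbf{St}(G)$, use Proposition~\ref{dec} to identify $\underline{\chi}(G,\bullet)\simeq F_{\mathfrak{X}}$ (so that conjugation by $\Omega^G$ gives $\mathrm{Aut}(\underline{\chi}(G,\bullet))\cong\mathrm{Aut}(F_{\mathfrak{X}})$), then apply Corollary~\ref{isom} and Lemma~\ref{autlem}. The paper names these three isomorphisms $\Psi_G$, $\Xi_G$, $\Theta_G$ for later use, but the argument is the same.
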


\begin{proof}
By Proposition \ref{dec}, we have the natural isomorphism of functors
\[
\underline{\chi}(G,\bullet)\simeq \bigsqcup_{\Pi\in\mathbf{St}(G)}\underline{\chi}(K_{\Pi},\bullet)=F_{\mathbf{St}(G)},
\]
where we consider $\mathbf{St}(G)$ as a family of sets. Then the theorem follows from Corollary \ref{isom} and Lemma \ref{autlem} with $\mathfrak{X}=\mathbf{St}(G)$,
\[
\mathrm{Aut}(\underline{\chi}(G,\bullet))\xlongrightarrow{\Psi_G}\mathrm{Aut}(F_{\mathbf{St}(G)})\xlongrightarrow{\Xi_G}\mathrm{Aut}(\mathbf{St}(G))\xlongrightarrow{\Theta_G}\prod_{\kappa}(\mathfrak{S}_{\kappa}\wr\mathfrak{S}_{n_{\kappa}}),
\]
where $\Psi_G(F)=\Omega^G\circ F\circ(\Omega^G)^{-1}$ (see Proposition \ref{dec}), $\Xi_G$ is as in Corollary \ref{isom} and $\Theta_G$ is as in Lemma \ref{autlem}. All the three maps are group isomorphisms.
\end{proof}

A graph automorphism $\varphi\in\mathrm{Aut}(G)$ induces a natural automorphism $\overline{\varphi}\in\mathrm{Aut}(\underline{\chi}(G,\bullet))$ as follow. The component of $\overline{\varphi}$ at a set $S$ is just precomposing with $\varphi^{-1}$,
\[
\overline{\varphi}_S(c)=c\circ\varphi^{-1}:V\xlongrightarrow{\varphi^{-1}}V\xlongrightarrow{c}S,
\]
where $c\circ\varphi^{-1}$ is indeed a regular coloring since $\varphi$ maps edges to edges. The naturality follows from the equality
\[
(\sigma\circ c)\circ\varphi^{-1}=\sigma\circ(c\circ\varphi^{-1}),
\]
for any injection $\sigma:S\to T$. The map $\varphi\mapsto\overline{\varphi}$ defines a group homomorphism $\Phi_G:\mathrm{Aut}(G)\to\mathrm{Aut}(\underline{\chi}(G,\bullet))$. Now we have the following sequence of group homomorphisms
\[
\mathrm{Aut}(G)\xlongrightarrow{\Phi_G}\mathrm{Aut}(\underline{\chi}(G,\bullet))\xlongrightarrow{\Psi_G}\mathrm{Aut}(F_{\mathbf{St}(G)})\xlongrightarrow{\Xi_G}\mathrm{Aut}(\mathbf{St}(G))\xlongrightarrow{\Theta_G}\prod_{\kappa}(\mathfrak{S}_{\kappa}\wr\mathfrak{S}_{n_{\kappa}}).
\]

\begin{lem}\label{alpha}
Let $\varphi\in\mathrm{Aut}(G)$ and denote $f:=\Psi_G(\overline{\varphi})\in\mathrm{Aut}(F_{\mathbf{St}(G)})$, $(\alpha;\alpha_{\Pi}):=\Xi_G(f)\in\mathrm{Aut}(\mathbf{St}(G))$, where $\alpha:\mathbf{St}(G)\to\mathbf{St}(G)$ and $\alpha_{\Pi}:\Pi\to\alpha(\Pi)$ are bijections. Then
\[
\alpha(\Pi_c)=\Pi_{c\circ\varphi^{-1}},
\]
for $\Pi_c\in\mathbf{St}(G)$ associated to $c\in\underline{\chi}(G,\Pi_c)$ and $\Pi_{c\circ\varphi^{-1}}\in\mathbf{St}(G)$ associated to $c\circ\varphi^{-1}\in\underline{\chi}(G,\Pi_c)$. Furthermore,
\begin{align*}
\alpha_{\Pi_c}:\Pi_c&\to\Pi_{c\circ\varphi^{-1}}\\
\pi&\mapsto \varphi(\pi),
\end{align*}
where $\varphi(\pi)=\{\varphi(v)\mid v\in\pi\}$.
\end{lem}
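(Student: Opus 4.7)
My plan is to unwind the composition $\Xi_G \circ \Psi_G$ on a carefully chosen test element and read off both $\alpha$ and $\alpha_\Pi$ directly from the recipe in Proposition \ref{Xi}, applied to $f := \Psi_G(\overline{\varphi}) = \Omega^G \circ \overline{\varphi} \circ (\Omega^G)^{-1}$. For each $\Pi \in \mathbf{St}(G)$, the natural test element is the canonical quotient coloring $c_\Pi : V \to \Pi$ sending a vertex to its block. A direct verification shows $\Pi_{c_\Pi} = \Pi$ and $\widetilde{c_\Pi} = \mathrm{id}_\Pi$, so by Proposition \ref{dec} we have $(\Omega^G_\Pi)^{-1}(\mathrm{id}_\Pi) = c_\Pi$.

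Next, I would compute
\[
f_\Pi(\mathrm{id}_\Pi) = \Omega^G_\Pi\bigl(\overline{\varphi}_\Pi(c_\Pi)\bigr) = \Omega^G_\Pi(c_\Pi \circ \varphi^{-1}).
\]
The key set-theoretic calculation is $(c_\Pi \circ \varphi^{-1})^{-1}(\pi) = \varphi(\pi)$, valid because $v \in (c_\Pi \circ \varphi^{-1})^{-1}(\pi)$ iff $\varphi^{-1}(v) \in \pi$ iff $v \in \varphi(\pi)$. Hence $\Pi_{c_\Pi \circ \varphi^{-1}} = \varphi(\Pi) := \{\varphi(\pi) \mid \pi \in \Pi\}$, and the associated tilde map is $\widetilde{c_\Pi \circ \varphi^{-1}} : \varphi(\Pi) \to \Pi$, $\varphi(\pi) \mapsto \pi$.

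Now I can read both outputs off Proposition \ref{Xi}. Since $f_\Pi(\mathrm{id}_\Pi) \in \underline{\chi}(K_{\varphi(\Pi)}, \Pi)$, the construction of $\alpha$ in that proposition forces $\alpha(\Pi) = \varphi(\Pi)$; applying the same preimage identity to an arbitrary $c$ gives $\Pi_{c \circ \varphi^{-1}} = \varphi(\Pi_c)$, so specialising $\Pi = \Pi_c$ yields the first claim. For the bijection $\alpha_\Pi$, the recipe $\alpha_\Pi^{-1} = f_\Pi(\mathrm{id}_\Pi)$ from Proposition \ref{Xi} gives $\alpha_\Pi^{-1}(\varphi(\pi)) = \pi$, i.e.\ $\alpha_\Pi(\pi) = \varphi(\pi)$, which is the second claim.

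The main pitfall is the indexing bookkeeping in Proposition \ref{Xi}: one must remember that $\alpha(\Pi)$ is extracted from the codomain block in which $f_\Pi(\mathrm{id}_\Pi)$ lands, and that the bijection $\alpha_\Pi$ itself is $f_\Pi^{-1}$ (not $f_\Pi$) of an identity. Once this is set up, the whole argument collapses to the one-line preimage computation above.
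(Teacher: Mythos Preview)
Your argument is correct and follows essentially the same route as the paper's proof: both compute $f_{\Pi}(\mathrm{id}_\Pi)=\Omega^G_\Pi(c_\Pi\circ\varphi^{-1})=\widetilde{c_\Pi\circ\varphi^{-1}}$ to read off $\alpha(\Pi)$, and then invert this map to obtain $\alpha_\Pi(\pi)=\varphi(\pi)$. The only cosmetic difference is that the paper packages the computation of $\widetilde{c\circ\varphi^{-1}}$ into two commutative diagrams, whereas you do the one-line preimage calculation $(c_\Pi\circ\varphi^{-1})^{-1}(\pi)=\varphi(\pi)$ directly; the underlying logic is identical.
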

\begin{proof}
For $\Pi_c\in\mathbf{St}(G)$ with $c:V\to\Pi_c$, recall from Proposition \ref{Xi} that $\alpha(\Pi_c)$ is defined as
\[
F_{\mathbf{St}(G)}(\Pi_c)\supset\underline{\chi}(K_{\Pi_c},\Pi_c)\ni\mathrm{id}_{\Pi_c}\mapsto f_{\Pi_c}(\mathrm{id}_{\Pi_c})\in\underline{\chi}(K_{\color{red}{\alpha(\Pi_c)}},\Pi_c)\subset F_{\mathbf{St}(G)}(\Pi_c).
\]
By definition of $\Psi_G$, in fact the commutativity of the following diagram
\[
\begin{tikzpicture}
  \matrix (m) [matrix of math nodes,row sep=3em,column sep=4em,minimum width=2em]
  {
     \underline{\chi}(G,\Pi_c) &  F_{\mathbf{St}(G)}(\Pi_c)\\
     \underline{\chi}(G,\Pi_c) & F_{\mathbf{St}(G)}(\Pi_c) \\};
  \path[-stealth]
    (m-1-1) edge node [left] {$\overline{\varphi}_{\Pi_c}$} (m-2-1)
    (m-1-1) edge node [below] {$\Omega^G_{\Pi_c}$} (m-1-2)
    (m-2-1) edge node [below] {$\Omega^G_{\Pi_c}$} (m-2-2)
    (m-1-2) edge node [right] {$f_{\Pi_c}$} (m-2-2);
\end{tikzpicture}
\]
we obtain $f_{\Pi_c}(\mathrm{id}_{\Pi_c})=\widetilde{c\circ\varphi^{-1}}:\Pi_{c\circ\varphi^{-1}}\to\Pi_c$. Hence
\[
\alpha(\Pi_c)=\Pi_{c\circ\varphi^{-1}}.
\]
The bijection $\alpha_{\Pi_c}:\Pi_c\to\Pi_{c\circ\varphi^{-1}}$ is the inverse of $\widetilde{c\circ\varphi^{-1}}$. By definition of $\widetilde{c\circ\varphi^{-1}}$ (see Section 1), we have the commutative diagram
\[
\begin{tikzpicture}
  \matrix (m) [matrix of math nodes,row sep=3em,column sep=4em,minimum width=2em]
  {
     V &  V\\
     \Pi_{c\circ\varphi^{-1}} & \Pi_c \\};
  \path[-stealth]
    (m-1-1) edge node [left] {proj} (m-2-1)
    (m-1-1) edge node [below] {$\varphi^{-1}$} (m-1-2)
    (m-2-1) edge node [below] {$\widetilde{c\circ\varphi^{-1}}$} (m-2-2)
    (m-1-2) edge node [right] {$c$} (m-2-2);
\end{tikzpicture}
\]
If $v\in V$ such that $c(v)=\pi\in\Pi_c$, then $\varphi(v)\in\varphi(\pi)$. Therefore
\[
\alpha_{\Pi_c}(\pi)=\varphi(\pi). \qedhere
\]
\end{proof}

\begin{prop}
Let $G$ be a (possibly infinite) graph. Then the group homomorphism $\Phi_G:\mathrm{Aut}(G)\to\mathrm{Aut}(\underline{\chi}(G,\bullet))$ is always injective. It is an isomorphism if and only if $G$ is a complete graph or the graph with $2$ vertices and no edges.
\end{prop}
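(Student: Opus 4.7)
The plan is to dispose of injectivity directly, then use Theorem~\ref{aut} and Lemma~\ref{alpha} to force a constraint on $G$ via the discrete partition $\Pi_0 = \{\{v\}\mid v\in V\}$. For injectivity, I evaluate a putative kernel element $\overline{\varphi}$ at $S = V$ on the identity coloring $\mathrm{id}_V$, which is a valid regular coloring since simple graphs have no loops; the image is $\mathrm{id}_V \circ \varphi^{-1} = \varphi^{-1}$, so $\overline{\varphi} = \mathrm{id}$ forces $\varphi = \mathrm{id}_V$.

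For the ``iff'', the observation is that $\Pi_0$ is always stable, and for any $\sigma \in \mathfrak{S}_V$ the tuple $(\alpha; \alpha_\Pi)$ with $\alpha = \mathrm{id}$, $\alpha_{\Pi_0}(\{v\}) = \{\sigma(v)\}$, and $\alpha_\Pi = \mathrm{id}_\Pi$ for $\Pi \neq \Pi_0$ defines an element of $\mathrm{Aut}(\mathbf{St}(G))$. If $\Phi_G$ is surjective, this element has a preimage $\varphi$, and Lemma~\ref{alpha} applied at $\Pi_0$ forces $\varphi = \sigma$, so $\sigma \in \mathrm{Aut}(G)$. Hence $\mathrm{Aut}(G) = \mathfrak{S}_V$, which (by the standard observation that all permutations of $V$ preserve edges iff the graph is edgeless or complete) forces $G = K_V$ or $G = \overline{K_V}$.

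The two remaining cases are handled separately. When $G = K_V$, $\Pi_0$ is the only stable partition (any larger block contains an edge), so Theorem~\ref{aut} gives $\mathrm{Aut}(\underline{\chi}(G,\bullet)) \cong \mathfrak{S}_V$, and the construction above identifies $\Phi_G$ with this isomorphism. When $G = \overline{K_V}$, every partition of $V$ is stable; for $|V| \le 2$ the stable partitions lie in pairwise distinct cardinality classes ($\Pi_0$ plus at most $\{V\}$), Theorem~\ref{aut} yields $\mathrm{Aut}(\underline{\chi}(G,\bullet))$ of the same order as $\mathrm{Aut}(G)$, and injectivity upgrades $\Phi_G$ to an isomorphism. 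For $|V| \ge 3$ (finite or infinite), the distinct stable partitions $\Pi_1 = \{\{v_1,v_2\},\{v_3\},\ldots\}$ and $\Pi_2 = \{\{v_1,v_3\},\{v_2\},\ldots\}$ have the same block-cardinality; the element of $\mathrm{Aut}(\mathbf{St}(G))$ that transposes $\Pi_1 \leftrightarrow \Pi_2$ while fixing every other $\Pi$ pointwise has $\alpha_{\Pi_0} = \mathrm{id}$, so by Lemma~\ref{alpha} any preimage $\varphi$ would have to satisfy $\varphi = \mathrm{id}_V$, contradicting the nontrivial swap.

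I expect the most delicate step to be the uniform treatment of infinite $V$: cardinality counting is unavailable, so both the reduction to $\{K_V, \overline{K_V}\}$ and the swap construction above must be phrased set-theoretically. Both go through unchanged, since $\Pi_1,\Pi_2$ are distinct by construction and have the same (possibly infinite) cardinality of blocks, and since the implication ``$\mathrm{Aut}(G) = \mathfrak{S}_V \Rightarrow G \in \{K_V,\overline{K_V}\}$'' is itself cardinality-free.
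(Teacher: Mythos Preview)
Your proof is correct and takes a genuinely different route from the paper. The paper proves the contrapositive of the ``only if'' direction by splitting into two cases: for finite non-complete $G$ (with $\#V\ge 3$) it uses a cardinality estimate $|\mathrm{Aut}(G)|\le n! < (n-1)!^{n_{n-1}}\,n_{n-1}!\,n! \le |\mathrm{Aut}(\underline{\chi}(G,\bullet))|$, and for infinite non-complete $G$ it exhibits the pair $\widehat{V},\ \Pi(v,u)$ of stable partitions of the same cardinality and argues that any element swapping them cannot lie in the image. Your argument instead extracts a single structural consequence of surjectivity: by probing $\mathrm{Aut}(\mathbf{St}(G))$ at the discrete partition $\Pi_0$ via Lemma~\ref{alpha}, you force $\mathrm{Aut}(G)=\mathfrak{S}_V$, hence $G\in\{K_V,\overline{K_V}\}$, and then only the edgeless case with $|V|\ge 3$ needs a direct obstruction (your swap of $\Pi_1,\Pi_2$). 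This is more uniform---no finite/infinite dichotomy, no counting---and isolates exactly why $\overline{K_V}$ fails for $|V|\ge 3$ while $K_V$ always works (namely, $K_V$ has $\Pi_0$ as its \emph{only} stable partition, so the $E_\sigma$ you build already exhausts $\mathrm{Aut}(\mathbf{St}(K_V))$). The paper's approach, on the other hand, gives quantitative information in the finite case: it shows $\mathrm{Aut}(\underline{\chi}(G,\bullet))$ is strictly larger than $\mathrm{Aut}(G)$ by an explicit factor, which your argument does not.
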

\begin{proof}
If $\varphi,\psi\in\mathrm{Aut}(G)$ have the same image $\overline{\varphi}=\overline{\psi}\in\mathrm{Aut}(\underline{\chi}(G,\bullet))$, that is, for any $c\in\underline{\chi}(G,S)$,
\[
c\circ\varphi^{-1}=c\circ\psi^{-1}.
\]
Then we have $c\circ\varphi^{-1}(v)=c\circ\psi^{-1}(v)$ for any $v\in V$. In particular, let $c$ be injective, we conclude $\varphi=\psi$. This shows that $\Phi_G$ is injective.

Now we suppose that $G$ is the complete graph $K_V$. It is known that $\mathrm{Aut}(G)=\mathfrak{S}_V\cong\mathfrak{S}_{\#V}$. Since $G$ has a unique stable partition $\widehat{V}=\{\{v\}\mid v\in V\}$, by Theorem \ref{aut}, $\mathrm{Aut}(\underline{\chi}(G,\bullet))\cong\mathfrak{S}_{\#V}$. Lemma \ref{alpha} implies that the injective homomorphism $\Phi_G$ must be an isomorphism. If $G$ is the graph with $2$ vertices and no edges, it is easy to see that $\mathrm{Aut}(G)\cong\mathrm{Aut}(\underline{\chi}(G,\bullet))\cong\mathfrak{S}_2$. The same conclusion follows.

Conversely, first we suppose that $G$ is a finite but not complete graph nor the graph with $2$ vertices and no edges. Say $n=\#V$, then $n\geq 3$. There exists a pair of vertices $(v,u)$ such that $\{v,u\}$ is not an edge. Then $G$ may lose some symmetries of $K_n$, in other words,
\[
|\mathrm{Aut}(G)|\leq|\mathfrak{S}_n|=n!.
\]
On the other hand, $G$ has fewer constrains of colorings than $K_n$. In terms of stable partitions, $\mathbf{St}_{n-1}(G)$ is not empty since $v$ and $u$ can be colored with the same color. Then by Theorem \ref{aut},
\[
|\mathrm{Aut}(\underline{\chi}(G,\bullet))|\geq |\mathfrak{S}_{n-1}\wr\mathfrak{S}_{n_{n-1}}|\cdot|\mathfrak{S}_n\wr\mathfrak{S}_1|=(n-1)!^{n_{n-1}}n_{n-1}!n!,
\]
where $n_{n-1}=\#\mathbf{St}_{n-1}(G)\geq 1$. Then
\[
|\mathrm{Aut}(G)|\leq n!< (n-1)!^{n_{n-1}}n_{n-1}!n!\leq |\mathrm{Aut}(\underline{\chi}(G,\bullet))|,
\]
since $n\geq 3$. Thus $\Phi_G$ can not be surjective.

Secondly, if $G$ is an infinite but not complete graph. There is a pair of vertices $(v,u)$ such that $\{v,u\}$ is not an edge. In this case, there are at least $2$ stable partitions of cardinality $\#V$, one is $\widehat{V}=\{\{w\}\mid w\in V\}$, the other is
\[
\Pi(v,u)=\{\{v,u\},\{w\}\mid w\in V\backslash\{v,u\}\}.
\]
If $(\alpha;\alpha_{\Pi})=\Xi_G(\Psi_G(\overline{\varphi}))\in\mathrm{Aut}(\mathbf{St}(G))$ for some $\varphi\in\mathrm{Aut}(G)$, then by Lemma \ref{alpha}, $\alpha_{\widehat{V}}$ maps each $\{w\}\in\widehat{V}$ to $\{\varphi(w)\}\in\widehat{V}$. This asserts that $\alpha(\widehat{V})=\widehat{V}$. Thus $(\beta;\beta_{\Pi})\in\mathrm{Aut}(\mathbf{St}(G))$ with $\beta(\widehat{V})=\Pi(v,u)$ is not in the image of $\Xi_G\circ\Psi_G\circ\Phi_G$. Therefore $\Phi_G$ can not be surjective. 

This completes the proof.
\end{proof}

\section{Representation stability}
Representation stability is a phenomenon arising in various branches of mathematics. Loosely speaking, a sequence of representations $V_n$ of a family of groups $G_n$ is representation stable if the growth of the irreducible decomposition of $V_n$ with respect to $n$ ``stabilizes'' in some sense. Since Church and Farb first introduced the idea in the early version of \cite{Church2013}, the subject has gained much attention. 

In this section, after a brief review of the definition of representation stability for $\mathfrak{S}_n$-representations, we present a new example of representation stability concerning chromatic functors of graphs. We refer the readers to their original paper \cite{Church2013} for details of this subject or to \cite{Farb2014} for a shorter survey. For basic facts of combinatorics and representation theory of symmetric groups, we refer to \cite{Stanley1999}.

\begin{defn}[Consistent sequence of $\mathfrak{S}_n$-representations]
A sequence $\{V_n,\phi_n\}_{n\geq 1}$ of finite dimensional complex representations $V_n$ of the symmetric groups $\mathfrak{S}_n$ together with linear transformations $\phi_n:V_n\to V_{n+1}$ is said to be consistent if the following diagram commutes
\[
\begin{tikzpicture}
  \matrix (m) [matrix of math nodes,row sep=3em,column sep=4em,minimum width=2em]
  {
     V_n & V_{n+1} \\
     V_n & V_{n+1} \\};
  \path[-stealth]
    (m-1-1) edge node [left] {$\sigma$} (m-2-1)
            edge node [below] {$\phi_n$} (m-1-2)
    (m-2-1) edge node [below] {$\phi_n$} (m-2-2)
    (m-1-2) edge node [right] {$\sigma$} (m-2-2);
\end{tikzpicture}
\]
for $n\geq 1$ and $\sigma\in\mathfrak{S}_n$, where $\sigma$ on the right is considered as an element of $\mathfrak{S}_{n+1}$ via the usual group inclusion $\mathfrak{S}_n\hookrightarrow\mathfrak{S}_{n+1}$.
\end{defn}

Irreducible (complex) representations of $\mathfrak{S}_n$ are classified by partitions $\{\mu\mid \mu\vdash n\}$ of the positive integer $n$. The latter could also be identified with Young diagrams with $n$ boxes. For a partition (Young diagram) $\mu\vdash n$, denote by $V_{\mu}$ the irreducible $\mathfrak{S}_n$-representation associated to $\mu$. In the context of representation stability, for a partition $\lambda=(\lambda_1,\ldots,\lambda_l)\vdash m$ and $n\geq m+\lambda_1$, let
\[
\lambda[n]:=(n-m,\lambda_1,\ldots,\lambda_l)\vdash n.
\]
We prefer to use
\[
V(\lambda)_n:=V_{\lambda[n]}
\]
to denote the associated irreducible $\mathfrak{S}_n$-representation.

\begin{defn}[Representation stability for $\mathfrak{S}_n$-representations]
Let $\{V_n,\phi_n\}_n$ be a consistent sequence of $\mathfrak{S}_n$-representations. We say that the sequence $\{V_n,\phi_n\}_n$ is representation stable with stable range $n\geq N$ if there is a positive integer $N$ such that the following conditions hold for $n\geq N$.
\begin{itemize}
\item (Injectivity) $\phi_n$ is injective.
\item (Orbit-surjectivity) The $\mathfrak{S}_{n+1}$-orbit of the image of $\phi_n$ equals $V_{n+1}$.
\item (Multiplicity stability) In the irreducible decomposition of $V_n$
\[
V_n=\bigoplus_{\lambda}V(\lambda)_n^{\oplus c_{n}(\lambda)},
\]
the multiplicity $0\leq c_n(\lambda)<\infty$ is independent of $n$.
\end{itemize}
\end{defn}

Known examples of consistent sequences that are representation stable includes
\begin{itemize}
\item $\{H^i(\mathrm{Conf}_n(M);\mathbb{C})\}_n$. The complex cohomology of the configuration space of $n$ distinct points on a connected orientable manifold $M$ of finite type (\cite{Church2012}).
\item $\{H^i(\mathcal{M}_g^n;\mathbb{C})\}_n$. The complex cohomology of the moduli space of Riemann surfaces of genus $g\geq 2$ with $n$ marked points (\cite{Rolland2011}).
\end{itemize}
See \cite{Church2015} for more examples. 

Our main result of this section is a new example of representation stability. Let $G=(V,E)$ be a finite graph, consider the following composition of functors
\[
\mathrm{FI}\xlongrightarrow{\underline{\chi}(G,\bullet)}\mathrm{FI}\xlongrightarrow{\mathbb{C}[\bullet]}\mathrm{Vect}_{\mathbb{C}},
\]
where $\mathrm{Vect}_{\mathbb{C}}$ is the category of complex vector spaces and $\mathbb{C}[\bullet]$ is the free functor taking a finite set $S$ to the complex vector space spanned by $S$. For a positive integer $n$, the vector space $\mathbb{C}[\underline{\chi}(G,[n])]$ has an $\mathfrak{S}_n$-representation structure as the permutation representation of the natural action of $\mathfrak{S}_n$ on $\underline{\chi}(G,[n])$ permuting colors. Explicitly, if $c\in\underline{\chi}(G,[n])$ is a (regular) $n$-coloring of $G$ and $\sigma\in\mathfrak{S}_n$ a permutation, then $\sigma c$ is the $n$-coloring $V\xlongrightarrow{c}[n]\xlongrightarrow{\sigma}[n]$. 

Let $\phi_n:\mathbb{C}[\underline{\chi}(G,[n])]\to\mathbb{C}[\underline{\chi}(G,[n+1])]$ be the linear map sending a basis element $c$ to $\iota_n\circ c:V\to [n]\hookrightarrow [n+1]$, where $\iota_n:[n]\hookrightarrow [n+1]$ is the usual inclusion. The sequence $\{\mathbb{C}[\underline{\chi}(G,[n])]\}_{n\geq1}$ is a consistent sequence of $\mathfrak{S}_n$-representations.

\begin{thm}\label{mainthm}
Let $G=(V,E)$ be a finite graph. Then the sequence $\{\mathbb{C}[\underline{\chi}(G,[n])]\}_n$ is representation stable for $n\geq 2\#V$.
\end{thm}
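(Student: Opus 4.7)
The plan is to combine Proposition \ref{dec} with a Pieri-rule computation. Applying the linearization $\mathbb{C}[\bullet]$ to the natural isomorphism of Proposition \ref{dec} turns the disjoint union into a direct sum of $\mathfrak{S}_n$-subrepresentations
\[
\mathbb{C}[\underline{\chi}(G,[n])] \;\cong\; \bigoplus_{\Pi \in \mathbf{St}(G)} \mathbb{C}[\underline{\chi}(K_\Pi,[n])],
\]
and this decomposition is compatible with the consistency maps $\phi_n$ because Proposition \ref{dec} is natural in the injection $[n]\hookrightarrow[n+1]$. Since $G$ is finite, $\mathbf{St}(G)$ is a finite index set with $\#\Pi \le \#V$ for every $\Pi$. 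Representation stability passes through finite direct sums (the stable range becoming the maximum of the summand ranges), so it suffices to prove the theorem when $G$ is a complete graph $K_\Pi$ with $k := \#\Pi \le \#V$.

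For $G = K_\Pi$, $\underline{\chi}(K_\Pi,[n])$ is the set of injections $\Pi \hookrightarrow [n]$, on which $\mathfrak{S}_n$ acts transitively with stabilizer isomorphic to $\mathfrak{S}_{n-k}$, so $\mathbb{C}[\underline{\chi}(K_\Pi,[n])] \cong \mathrm{Ind}_{\mathfrak{S}_{n-k}}^{\mathfrak{S}_n}\mathbf{1}$. Injectivity of $\phi_n$ is immediate, since distinct basis vectors map to distinct basis vectors. Orbit-surjectivity follows because any injection $c:\Pi\hookrightarrow[n+1]$ can be carried by some $\sigma\in\mathfrak{S}_{n+1}$ to one landing in $[n]$ as soon as $n\ge k$, which holds throughout the claimed stable range.

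The heart of the argument is multiplicity stability. Expanding $\mathbb{C}[\mathfrak{S}_k]=\bigoplus_{\mu\vdash k}(\dim V_\mu)V_\mu$ and applying Pieri's rule yields
\[
\mathrm{Ind}_{\mathfrak{S}_{n-k}}^{\mathfrak{S}_n}\mathbf{1}\;\cong\;\bigoplus_{\mu\vdash k}(\dim V_\mu)\bigoplus_{\substack{\lambda\vdash n\\ \lambda/\mu\text{ horiz.\ strip}}}V_\lambda.
\]
Setting $\lambda = \nu[n] = (n-|\nu|,\nu_1,\ldots)$, the horizontal-strip condition becomes $\nu_i\le\mu_i$ for all $i\ge 1$, $\mu_{i+1}\le\nu_i$ for all $i\ge 1$, together with the single overhang constraint $\mu_1\le n-|\nu|$. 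The first two systems are $n$-free, and the overhang constraint is automatic once $n\ge 2k$, since $\mu\vdash k$ forces $\mu_1\le k$ while the strip inequalities $\mu_i\ge\nu_i$ force $|\nu|\le k$. Hence the multiplicity of $V(\nu)_n$ is independent of $n$ for $n\ge 2k$; taking the maximum of $2k$ over $\Pi\in\mathbf{St}(G)$ yields the stable range $n\ge 2\#V$. The only real obstacle is this Pieri bookkeeping, i.e.\ pinning down precisely when the horizontal-strip inequalities lose their $n$-dependence; once that is done, the three defining conditions of representation stability fall out of the same decomposition.
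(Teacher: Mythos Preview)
Your proof is correct and follows the same overall architecture as the paper: reduce to complete graphs via Proposition~\ref{dec}, identify $\mathbb{C}[\underline{\chi}(K_k,[n])]$ with $\mathrm{Ind}_{\mathfrak{S}_{n-k}}^{\mathfrak{S}_n}\mathbf{1}$, and then verify multiplicity stability with range $n\ge 2k$. The one substantive difference is in that last step. The paper applies Young's rule directly to the composition $\alpha(n,k)=(n-k,1^k)$, so the multiplicities are Kostka numbers $K_{\lambda[n],\alpha(n,k)}$, and stability is read off from the fact that an SSYT of this content has all its $1$'s pinned to the first row while the remaining $k$ boxes (labels $2,\dots,k+1$) form an $n$-independent configuration once $n-k\ge k$. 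You instead factor the induction through $\mathfrak{S}_{n-k}\times\mathfrak{S}_k$, expand the regular representation $\mathbb{C}[\mathfrak{S}_k]=\bigoplus_{\mu\vdash k}(\dim V_\mu)V_\mu$, and apply Pieri's rule, isolating the single $n$-dependent inequality $\mu_1\le n-|\nu|$. The two computations are equivalent (Young's rule is iterated Pieri, and your horizontal-strip count for fixed $\nu$ recovers $K_{\nu[n],(n-k,1^k)}$), but your formulation makes the source of the bound $n\ge 2k$ slightly more transparent, since it appears as one explicit inequality rather than being extracted from an extremal tableau.
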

 
Our strategy of the proof is to use Proposition \ref{dec} to reduce the problem to the case of complete graphs. By Proposition \ref{dec}, we have the following isomorphism of functors
\[
\underline{\chi}(G,\bullet)\simeq \bigsqcup_{1\leq k\leq \#V}\underline{\chi}(K_k,\bullet)^{\sqcup n_k}
\]
where $K_k$ denotes the complete graph on $k$ vertices and $n_k=\#\mathbf{St}_k(G)$. This yields the following isomorphism of functors
\begin{equation}
\mathbb{C}[\underline{\chi}(G,\bullet)]\simeq \bigoplus_{1\leq k\leq \#V}\mathbb{C}[\underline{\chi}(K_k,\bullet)]^{\oplus n_k}. \label{decomp}
\end{equation}
The question now reduces to the case of complete graphs. Note that the $\mathfrak{S}_n$-representation $\mathbb{C}[\underline{\chi}(K_k,[n])]$ is obviously isomorphic to the permutation representation of the $\mathfrak{S}_n$-action on the set
\[
\mathrm{Conf}_k([n]):=\{(i_1,\ldots,i_k)\mid i_j\in[n], i_j\neq i_{j^{\prime}} \text{ if } j\neq j^{\prime}\}
\]
given by $\sigma(i_1,\ldots,i_k)=(\sigma(i_1),\ldots,\sigma(i_k))$ for $\sigma\in\mathfrak{S}_n$.

Recall that a semistandard Young tableaux (SSYT) is an assignment of one positive integer to each box of a Young diagram (called its shape) such that each row is non-strictly increasing and each column is strictly increasing. The type of an SSYT is the tuple $(m_1,m_2,\ldots)$ where $m_i$ is the times number $i$ appears in this SSYT. For a Young diagram $\mu$ and a tuple $\alpha$ of nonnegative integers, the Kostka number $K_{\mu,\alpha}$ is the number of SSYTs of shape $\mu$ and of type $\alpha$ (see Section 7.10 of \cite{Stanley1999}).

\begin{lem}\label{lem1}
For $n\geq k$, the irreducible decomposition of the permutation representation $\mathbb{C}[\mathrm{Conf}_k([n])]$ is
\[
\mathbb{C}[\mathrm{Conf}_k([n])]=\bigoplus_{\mu\vdash n}V_{\mu}^{\oplus K_{\mu,\alpha(n,k)}},
\]
where $\alpha(n,k)=(n-k,\overbrace{1,\ldots,1}^{k})$.
\end{lem}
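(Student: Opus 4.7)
The plan is to recognize $\mathbb{C}[\mathrm{Conf}_k([n])]$ as a Young permutation module and then invoke Young's rule. First I would observe that $\mathfrak{S}_n$ acts transitively on $\mathrm{Conf}_k([n])$, and the stabilizer of the distinguished tuple $(1,2,\ldots,k)$ consists exactly of those permutations fixing each of $1,\ldots,k$ individually; this stabilizer is the pointwise stabilizer $\mathfrak{S}_{\{k+1,\ldots,n\}}\cong\mathfrak{S}_{n-k}$. Consequently $\mathrm{Conf}_k([n])\cong\mathfrak{S}_n/\mathfrak{S}_{n-k}$ as $\mathfrak{S}_n$-sets, giving the identification
\[
\mathbb{C}[\mathrm{Conf}_k([n])]\cong\mathrm{Ind}_{\mathfrak{S}_{n-k}}^{\mathfrak{S}_n}\mathbf{1}.
\]

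Next I would regard $\mathfrak{S}_{n-k}$ as sitting inside $\mathfrak{S}_n$ as the Young subgroup $\mathfrak{S}_{n-k}\times\mathfrak{S}_1\times\cdots\times\mathfrak{S}_1$ with $k$ trivial factors, which is precisely the Young subgroup attached to the composition $\alpha(n,k)=(n-k,1,\ldots,1)$. Hence the induced representation above is, by definition, the Young permutation module $M^{\alpha(n,k)}$. Young's rule (see Section 7.18 of \cite{Stanley1999}) then supplies, for any composition $\alpha$ of $n$, the decomposition
\[
M^{\alpha}\cong\bigoplus_{\mu\vdash n}V_{\mu}^{\oplus K_{\mu,\alpha}},
\]
and specializing to $\alpha=\alpha(n,k)$ yields the lemma.

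The hypothesis $n\geq k$ enters only to ensure that $\alpha(n,k)$ is a genuine composition of $n$ (with $n-k\geq 0$). There is no real obstacle here beyond citing Young's rule correctly; the argument is essentially a reduction to a classical fact about symmetric group representations, and the only genuine ``content'' lies in the stabilizer computation, which is what pinpoints the correct Young subgroup.
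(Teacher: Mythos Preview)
Your argument is correct and essentially identical to the paper's own proof: both identify the permutation module $\mathbb{C}[\mathrm{Conf}_k([n])]$ with $\mathrm{Ind}_{\mathfrak{S}_{\alpha(n,k)}}^{\mathfrak{S}_n}\mathbb{C}$ via a stabilizer computation (the paper chooses the base point $(n-k+1,\ldots,n)$ rather than your $(1,\ldots,k)$, which is immaterial) and then apply Young's rule from \cite{Stanley1999}.
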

\begin{proof}
Consider the Young subgroup $\mathfrak{S}_{\alpha(n,k)}$ of $\mathfrak{S}_n$ associated to the partition $\alpha(n,k)\vdash n$. That is the subgroup consisting of permutations that permute the first $n-k$ numbers and fix the others. Then $\mathfrak{S}_{\alpha(n,k)}$ is the stabilizer of $(n-k+1,\ldots,n)\in\mathrm{Conf}_k([n])$. One observes that
\[
\mathbb{C}[\mathrm{Conf}_k([n])]=\mathrm{Ind}_{\mathfrak{S}_{\alpha(n,k)}}^{\mathfrak{S}_n}\mathbb{C},
\]
where the right hand side is the induced representation of the trivial $\mathfrak{S}_{\alpha(n,k)}$-representation. Then Young's rule (Proposition 7.18.7 of \cite{Stanley1999}) gives
\[
\mathrm{Ind}_{\mathfrak{S}_{\alpha(n,k)}}^{\mathfrak{S}_n}\mathbb{C}=\bigoplus_{\mu\vdash n}V_{\mu}^{\oplus K_{\mu,\alpha(n,k)}}. \qedhere
\]
\end{proof}

\begin{thm}\label{main}
The sequence $\{\mathbb{C}[\underline{\chi}(K_k,[n])]\}_n$ is representation stable for $n\geq2k$.
\end{thm}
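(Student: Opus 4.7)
The plan is to verify the three conditions of representation stability directly, using Lemma \ref{lem1} as the main tool for multiplicity stability. Throughout, I identify $\mathbb{C}[\underline{\chi}(K_k,[n])]$ with the permutation representation $\mathbb{C}[\mathrm{Conf}_k([n])]$ as noted in the excerpt.

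First I would verify injectivity and orbit-surjectivity of $\phi_n$ at the level of colorings, which then carries to the linearization. Injectivity is immediate, since $\phi_n$ sends the basis element $c$ to $\iota_n\circ c$, and postcomposition with the injection $\iota_n:[n]\hookrightarrow[n+1]$ is injective on maps. For orbit-surjectivity, any $c'\in\underline{\chi}(K_k,[n+1])$ is an injection from a $k$-vertex set into $[n+1]$, so its image is a $k$-subset of $[n+1]$. If this image lies in $[n]$, then $c'$ is already in the image of $\phi_n$. Otherwise $n+1\in\mathrm{im}(c')$, and since $n\geq 2k\geq k$ the set $[n]\setminus\mathrm{im}(c')$ is nonempty; choose any element $j$ there and let $\sigma\in\mathfrak{S}_{n+1}$ be the transposition $(j\ n+1)$. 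Then $\sigma^{-1}\cdot c'$ has image in $[n]$, lies in the image of $\phi_n$, and $c'=\sigma\cdot(\sigma^{-1}\cdot c')$.

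For multiplicity stability, I would invoke Lemma \ref{lem1} to write
\[
\mathbb{C}[\underline{\chi}(K_k,[n])]\cong\bigoplus_{\mu\vdash n}V_{\mu}^{\oplus K_{\mu,\alpha(n,k)}},\qquad \alpha(n,k)=(n-k,\overbrace{1,\ldots,1}^k),
\]
and analyze the Kostka number $K_{\lambda[n],\alpha(n,k)}$ for a fixed partition $\lambda=(\lambda_1,\ldots,\lambda_l)\vdash m$ when $n\geq 2k$. The key combinatorial observation is that in an SSYT of type $\alpha(n,k)$, all $n-k$ copies of $1$ must occupy the first row (columns containing a $1$ not in row $1$ would violate the strict-column condition). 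Since $n\geq 2k$ and any nonvanishing multiplicity forces $m\leq k$ and hence $\lambda_1\leq k$, we have $n-k\geq\lambda_1$, so those $n-k$ ones fill the first row above every other row of $\lambda[n]$. The remaining entries $\{2,3,\ldots,k+1\}$ (each appearing once) split into: a subset of size $k-m$ placed in the first row in their unique increasing order, and a subset of size $m$ arranged as an SSYT of shape $\lambda$ with distinct entries. The latter count equals $f^\lambda$, the number of standard Young tableaux of shape $\lambda$, so
\[
K_{\lambda[n],\alpha(n,k)}=\binom{k}{m}f^\lambda,
\]
which is independent of $n$ for $n\geq 2k$. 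Multiplicities that are zero (when $m>k$) remain zero throughout the stable range.

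The main obstacle is really this final Kostka computation, and in particular making sure that the stable range $n\geq 2k$ is exactly what is needed to guarantee $n-k\geq\lambda_1$ uniformly over all $\lambda$ contributing with nonzero multiplicity; once this is in place, each of the three axioms is routine. I would wrap up by remarking that Theorem \ref{mainthm} then follows from the functorial decomposition \eqref{decomp}, since a finite direct sum of representation stable consistent sequences is representation stable, and the stable range $n\geq 2\#V$ dominates $n\geq 2k$ for every $k\leq \#V$ appearing in the sum.
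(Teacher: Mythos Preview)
Your proof is correct and follows essentially the same route as the paper's: both verify injectivity and orbit-surjectivity directly on basis elements and then use Lemma~\ref{lem1} to reduce multiplicity stability to the observation that in an SSYT of type $\alpha(n,k)$ the $n-k$ ones fill the first row, leaving a configuration of the $k$ distinct labels $2,\ldots,k+1$ that is independent of $n$ once $n\geq 2k$. Your explicit formula $K_{\lambda[n],\alpha(n,k)}=\binom{k}{m}f^{\lambda}$ is a mild sharpening of the paper's argument, which only notes that the count stabilizes without evaluating it.
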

\begin{proof}
It is easy to check that $\phi_n:\mathbb{C}[\underline{\chi}(K_k,[n])]\to\mathbb{C}[\underline{\chi}(K_k,[n+1])]$ is injective and that the $\mathfrak{S}_{n+1}$-orbit of the image of $\phi_n$ equals $\mathbb{C}[\underline{\chi}(K_k,[n+1])]$ for $n\geq 2k$. It suffices to show that in the irreducible decomposition
\[
\mathbb{C}[\underline{\chi}(K_k,[n])]=\bigoplus_{\lambda}V(\lambda)_n^{\oplus c_n(\lambda)},
\]
the multiplicity $c_n(\lambda)$ is independent of $n$ if $n\geq 2k$. By Lemma \ref{lem1}, for fixed $k$ and $n\geq k$,
\[
\mathbb{C}[\underline{\chi}(K_k,[n])]=\bigoplus_{\mu\vdash n}V_{\mu}^{\oplus K_{\mu,\alpha(n,k)}}.
\]
Consider an SSYT $T$ of type $\alpha(n,k)$, the $1$'s have to occupy the first $n-k$ (or $1$ if $n=k$) boxes in the first row. We only need to arrange the $k$ boxes labeled by $2,\ldots,k+1$. First we have
\[
\begin{ytableau}
1 & \cdots & 1 & 2 & 3 & \cdots & \scriptstyle k+1
\end{ytableau}
\]
which contributes to one copy of the trivial representation $V(0)_n$. We can also move $m$ of the $k$ boxes labeled by $2,\ldots,k+1$ to lower rows and form a $\lambda\vdash m$, which contributes to one copy of $V(\lambda)_n$. Therefore the multiplicity
\[
c_n(\lambda)=K_{\lambda[n],\alpha(n,k)}.
\]
It is remarkable to note that for a fixed $\lambda$, this number is constant once $n$ is large enough since the only change occurs in the first row when $n$ increases. The extreme case is
\[
\begin{ytableau}
1 & \cdots & 1 & \cdots & 1\\
2 & \cdots & \scriptstyle k+1 
\end{ytableau}
\]
which contributes to one copy of $V(k)_n$, and the appearance of this copy requires $n-k\geq k$. This gives the stable range $n\geq 2k$.
\end{proof}

\begin{proof}[Proof of Theorem \ref{mainthm}]
The injectivity and orbit surjectivity of $\phi_n$ are easily verified. By the isomorphism (\ref{decomp}), we obtain
\begin{align*}
\mathbb{C}[\underline{\chi}(G,[n])]&\simeq \bigoplus_{1\leq k\leq \#V}\mathbb{C}[\underline{\chi}(K_k,[n])]^{\oplus n_k}\\
&\simeq \bigoplus_{1\leq k\leq \#V}\left(\bigoplus_{\lambda[n]\vdash n}V(\lambda)_n^{\oplus K_{\lambda[n],\alpha(n,k)}}\right)^{\oplus n_k}\\
&=\bigoplus_{\lambda[n]\vdash n}V(\lambda)_n^{\oplus\sum_{1\leq k\leq \#V}n_k K_{\lambda[n],\alpha(n,k)}}
\end{align*}
Theorem \ref{main} asserts that for a fixed $\lambda$, the number $K_{\lambda[n],\alpha(n,k)}$ is constant for $n\geq 2k$. Therefore for a fixed $\lambda$, the multiplicity
\[
\sum_{1\leq k\leq \#V}n_k K_{\lambda[n],\alpha(n,k)}
\]
is constant for $n\geq 2\#V$.

This shows that the sequence $\{\mathbb{C}[\underline{\chi}(G,[n])]\}_n$ is representation stable for $n\geq 2\#V$.
\end{proof}

\proof[Acknowledgements]
The author would like to thank Professor Masahiko Yoshinaga for posing the questions and for valuable discussions.


\begin{thebibliography}{99}



\bibitem{Church2012}T. Church,
\emph{Homological stability for configuration spaces of manifolds.} 
Invent. Math.
{\bf 188}(2012), no.~2, 465--504.


\bibitem{Church2015}T. Church, J.~S. Ellenberg and B. Farb,
\emph{FI-modules and stability for representations of symmetric groups.} 
Duke Math. J.
{\bf 164}(2015), no.~9, 1833--1910.


\bibitem{Church2013}T. Church and B. Farb,
\emph{Representation theory and homological stability.} 
Adv. Math.
{\bf 245}(2013), 250--314.

\bibitem{Farb2014}B. Farb,
\emph{Representation stability.} 
arXiv:1404.4065.

\bibitem{Rolland2011}R. Jimenez-Rolland,
\emph{Representation stability for the cohomology of the moduli space $\mathcal{M}_g^n$.} 
Algebr. Geom. Topol. 
{\bf 11}(2011), no.~5, 3011--3041.

\bibitem{Read1968}R.~C. Read,
\emph{An introduction to chromatic polynomials.} 
J. Comb. Theory
{\bf 4}(1968), no.~1, 52--71.



\bibitem {Stanley1999}
R.~P. Stanley,
\emph{Enumerative combinatorics. {V}ol. 2.}
Cambridge Studies in Advanced Mathematics, Vol. 62,
Cambridge University Press, Cambridge, 1999.

\bibitem{Yoshinaga2015}M. Yoshinaga,
\emph{Chromatic functors of graphs.} 
arXiv:1507.06587.




\end{thebibliography}
\end{document}